\spnewtheorem*{theorem*}{Theorem}{\bf}{\itshape}
\spnewtheorem*{proof*}{}{\itshape}{\upshape}
\newcolumntype{C}[1]{>{\hfil}m{#1}<{\hfil}}
\DeclareMathOperator{\FV}{FV}
\DeclareMathOperator{\dom}{dom}
\newcommand{\To}{\Rightarrow}
\newcommand{\bzero}{\mathbf{0}}
\newcommand{\bone}{\mathbf{1}}
\newcommand{\Fml}{\mathop{\mathrm{FOFml}}}
\newcommand{\ba}{\mathbf{a}}
\newcommand{\bb}{\mathbf{b}}
\newcommand{\bc}{\mathbf{c}}
\newcommand{\ttfunc}[1]{\mathop{\mathrm{t}_{#1}}}
\newcommand{\arity}{\mathop{\mathrm{ar}}}
\newcommand{\ILS}{\mathop{\mathrm{ILS}}}
\newcommand{\CLS}{\mathop{\mathrm{CLS}}}
\newcommand{\sK}{\mathscr{K}}
\newcommand{\sC}{\mathscr{C}}
\newcommand{\sM}{\mathscr{M}}
\newcommand{\Sqt}{\mathop{\mathrm{Sqt}}}
\newcommand{\PropFml}{\mathop{\mathrm{Fml}}}
\newcommand{\FOCLS}{\mathop{\mathrm{FOCLS}}}
\newcommand{\FOCDS}{\mathop{\mathrm{FOCDS}}}
\newcommand{\FOILS}{\mathop{\mathrm{FOILS}}}
\newcommand{\FOCL}{\mathop{\mathrm{FOCL}}}
\newcommand{\FOCD}{\mathop{\mathrm{FOCD}}}
\newcommand{\FOIL}{\mathop{\mathrm{FOIL}}}
\newcommand{\FOSqt}{\mathop{\mathrm{FOSqt}}}
\newcommand{\emptyfunc}{\varnothing}
\newcommand{\verticalspace}{\vspace{0.1cm}}
\begin{document}
\title{Effect of the Choice of Connectives \\ on the Relation  between \\ the Logic of Constant Domains \\ and Classical Predicate Logic}
\titlerunning{Effect of the Choice of Connectives}
\author{Naosuke Matsuda\inst{1}
\and
Kento Takagi\inst{2}\orcidID{0000-0003-3810-9610}}
\authorrunning{N. Matsuda and K. Takagi}
\institute{Department of Engineering, Niigata Institute of Technology, \\
Fujihashi, Kashiwazaki City, Niigata 945-1195, Japan \\
\email{matsuda.naosuke@gmail.com}\\
\and
Department of Computer Science, Tokyo Institute of Technology, \\ Ookayama, Meguro-ku, Tokyo 152-8522, Japan \\
\email{kento.takagi.aa@gmail.com}}
\maketitle         
\begin{abstract}
It is known that not only classical semantics but also intuitionistic Kripke semantics can be generalized so that it can treat arbitrary propositional connectives characterized by truth tables, or truth functions. In our previous work, it has been shown that the set of Kripke-valid propositional sequents and that of classically valid propositional sequents coincide if and only if all available propositional connectives are monotonic. The present paper extend this result to first-order logic showing that, in the case of predicate logic, the condition that all available propositional connectives are monotonic is a necessary and sufficient condition for the set of sequents valid in all constant domain Kripke models, not the set of Kripke-valid sequents, and the set of classically valid sequents to coincide. 

\keywords{Kripke semantics  \and Propositional connective \and Intuitionistic predicate logic \and The logic of constant domains \and Classical predicate logic.}
\end{abstract}
\section{Introduction}\label{Section 1}
\subsection{Generalized propositional logic}\label{Subsection 1.1}
In \cite{kripke1965semantical}, Kripke provided the intuitionistic interpretation for formulas built out of the usual propositional connectives $\lnot$, $\to$, $\land$ and $\lor$. The notion of validity in intuitionistic logic can be defined with this interpretation. Rousseau~\cite{rousseau1970sequents} and Geuvers and Hurkens~\cite{geuvers2017deriving} extended the intuitionistic interpretation so that it can treat arbitrary propositional connectives characterized by truth tables, or truth functions. Their idea is very simple: when $c$ is a propositional connective and $\ttfunc{c}$ is the truth function associated with $c$, then the interpretation $\| c(\alpha_1, \ldots, \alpha_n) \|_w$ of formula $c(\alpha_1, \ldots, \alpha_n)$ at world $w$ is defined as follows:
  \[
  \| c(\alpha_1, \ldots, \alpha_n )\|_w = 1 \ \text{ if and only if } \ \text{$\ttfunc{c}(\| \alpha_1 \|_v, \ldots, \| \alpha_n \|_v) = 1$ for all $v \succeq w$}.
  \]   
It is well-known that the relation between intuitionistic logic and  classical logic changes by the choice of propositional connectives. In particular, the relation between the sets of valid sequents changes. 
For example, $\ILS(\{\lnot\}) \subsetneq \CLS(\{ \lnot \})$ and $\ILS(\{\land, \lor \}) = \CLS(\{ \land, \lor \})$, where, for a set of propositional connectives $\sC$, $\ILS(\sC)$ denotes the set of Kripke-valid propositional sequents built out of the connectives in $\sC$ and $\CLS(\sC)$ denotes the set of classically valid propositional sequents built out of the connectives in $\sC$.
Then, there arises a natural question: for what $\sC$, does $\ILS(\sC) = \CLS(\sC)$ hold? We answered this question in \cite{kawano2021effect}. 
But, before describing the answer, we briefly review some necessary notions.

For each connective $c$, $\arity(c)$ denotes the arity of $c$.
Let $\sC$ be a set of propositional connectives. We denote by $\ILS(\sC)$ the set of Kripke-valid sequents built out of the propositional connectives in $\sC$ and by $\CLS(\sC)$ the sets of classically-valid sequents built out of the propositional connectives in $\sC$. For a sequence of truth values $\ba \in \{ 0, 1 \}^n$, $\overline{\ba} \in \{0,1\}^n$ denotes the sequence of truth values obtained from $\ba$ by inverting $0$ and $1$. $\sqsubseteq_n$ is the natural order on $\{ 0, 1 \}^n$, that is, for $\ba = \langle a_1, \ldots, a_n \rangle \in \{0,1\}^n$ and $\bb = \langle b_1, \ldots, b_n \rangle \in \{0, 1 \}^{n}$, $\ba \sqsubseteq_n \bb$ if and only if $a_i \leq b_i$ for all $i = 1, \ldots, n$. For $\ba \in \{0,1\}^n$ and $\bb \in \{0, 1\}^n$, $\ba \sqcap \bb$ denotes the infimum of the set $\{ \ba, \bb \}$ with respect to $\sqsubseteq_n$. $\langle 1, \ldots, 1 \rangle \in \{ 0, 1 \}^n$ and $\langle 0, \ldots, 0 \rangle \in \{ 0, 1 \}^n$ are denoted by $\bone_n$ and $\bzero_n$, respectively. We shall omit the subscript $n$ of $\sqsubseteq_n$, $\bone_n$ and $\bzero_n$ if it is clear from the context. For details, see \S \ref{Section 2}.

Then, the necessary and sufficient condition for $\ILS(\sC)$ and $\CLS(\sC)$ to coincide is described as follows:  
  \begin{theorem*}[\cite{kawano2021effect}]
  $\ILS(\sC) = \CLS(\sC)$ if and only if all connectives in $\sC$ are monotonic, that is, all $c \in \sC$ satisfy the following condition: for any $\ba, \bb \in \{ 0, 1\}^{\arity(c)}$, if $\ba \sqsubseteq \bb$ then $\ttfunc{c}(\ba) \leq \ttfunc{c}(\bb)$.
  \end{theorem*}
\subsection{Results}\label{Subsection 1.2}
The present paper extends the preceding theorem to first-order logic. Generalized Kripke semantics can be extended to first-order logic by adding $\forall$ and $\exists$ with the usual interpretations. 
Let $\FOILS(\sC)$ denote the set of Kripke-valid sequents built out of the quantifiers $\forall$ and $\exists$ and the propositional connectives in $\sC$ and let $\FOCLS(\sC)$ denote the set of classically valid sequents built out of the quantifiers $\forall$ and $\exists$ and the propositional connectives in $\sC$. 
Then, the following claim might seem a straightforward extension of the preceding theorem to first-order logic: $\FOILS(\sC) = \FOCLS(\sC)$ if and only if all connectives in $\sC$ are monotonic. However, this claim fails. Instead, if we extend the proof of the preceding theorem, we obtain a necessary and sufficient condition for the set of sequents that are valid with respect to \emph{constant domain} Kripke semantics and that of classically valid sequents to coincide:
  \begin{theorem*}
  Let $\FOCDS(\sC)$ denote the set of sequents built out of the quantifiers $\forall$ and $\exists$ and the propositional connectives in $\sC$ which are valid in all constant domain Kripke models. Then,
  $\FOCDS(\sC) = \FOCLS(\sC)$ if and only if all connectives in $\sC$ are monotonic.
  \end{theorem*}
We give a proof of this main theorem extending the proof of the theorem that gives the necessary and sufficient condition for $\ILS(\sC)$ and $\CLS(\sC)$ to coincide.
\subsection{Overview}\label{Subsection 1.3}
In \S \ref{Section 2}, we introduce basic concepts and extend the general propositional logic to first-order logic. In \S \ref{Section 3}, we show the main theorem.

\section{Preliminaries}
\label{Section 2}
\subsection{Connectives and truth functions}
\label{Subsection 2.1}
The elements of a set $\{ 0, 1\}$ are called the \emph{truth values}. $\{ 0, 1 \}^n$ denotes the set of sequences of truth values of length $n$. We shall use letters $\ba$, $\bb$ and $\bc$ to denote arbitrary finite sequences of truth values. We denote by $\bzero_n$ and $\bone_n$ the sequence $\langle 0, \ldots, 0 \rangle \in \{0, 1\}^n$ and $\langle 1, \ldots, 1 \rangle \in \{0, 1\}^n$, respectively. 
For $\ba \in \{0,1\}^n$, we denote by $\ba[i]$ the $i$-th value of $\ba$. For example, $\langle 0, 1, 0 \rangle [1] = \langle 0, 1, 0 \rangle [3] = 0$ and $\langle 0, 1, 0 \rangle [2] = 1$.
For $\ba \in \{ 0, 1\}^n$, $\overline{\ba}$ denotes the sequence obtained from $\ba$ by inverting $0$ and $1$. For example, $\overline{\langle 0, 1, 0 \rangle} = \langle 1, 0, 1 \rangle$.
An $n$-ary \emph{truth function} is a function from $\{ 0, 1\}^n$  to $\{ 0, 1\}$. 

The natural order $\sqsubseteq_n$ on $\{ 0, 1\}^n$ is defined as follows: for $\ba \in \{ 0, 1 \}^n$ and $\bb \in \{ 0, 1 \}^n$, $\ba \sqsubseteq_n \bb$ if and only if $\ba[i] \leq \bb[i]$ for all $i = 1, \ldots, n$. Here, $\leq$ denotes the usual order on $\{0,1\}$ defined by $0 \leq 0$, $1 \leq 1$, $0 \leq 1$ and $1 \not \leq 0$. In what follows, we shall omit the subscript $n$ of $\bzero_n$, $\bone_n$ and $\sqsubseteq_n$, since it is clear from the context. For $\ba, \bb \in \{0,1\}^n$, $\ba \sqcap \bb$ denotes the infimum of $\{ \ba, \bb \}$. It is obvious that $\ba \sqcap \bb$ can be calculated as follows:
  \[
  (\ba \sqcap \bb) [i] = 
    \begin{cases}
    1 & \text{if $\ba[i] = 1$ and $\bb[i] = 1$} \\
    0 & \text{if $\ba[i] = 0$ or $\bb[i] = 0$}.
    \end{cases}
  \]
An $n$-ary truth function $f$ is said to be \emph{monotonic} if for all $\ba, \bb \in \{ 0, 1 \}^n$, $\ba \sqsubseteq \bb$ implies $f(\ba) \leq f(\bb)$. 

\subsection{Propositional connectives and formulas}
\label{Subsection 2.2}
A \emph{propositional connective} is a symbol with a truth function. For a propositional connective $c$, we denote by $\ttfunc{c}$ the truth function associated with $c$ and by $\arity(c)$ the arity of $\ttfunc{c}$. We shall use letters $c$ and $d$ as metavariables for propositional connectives.
 
Assume a set $\sC$ of propositional connectives is given. We define the first-order language with propositional connectives in $\sC$. It consists of the following symbols: countably infinitely many individual variables; countably infinitely many \footnote{As we can see from the proofs in this paper, only a small number of supplies of predicate symbols suffice actually.} $n$-ary predicate symbols for each $n \in \mathbb{N}$; propositional connectives in $\sC$; quantifiers $\forall$ and $\exists$. $0$-ary predicate symbols are also called  \emph{propositional symbols}.  
Although all arguments in this paper work with trivial modifications if the language has function symbols and constant symbols, we assume the language has no function symbols and no constant symbols for simplicity.
We shall use $x$, $y$ and $z$ as metavariables for individual variables; $p$, $q$, $r$ and $s$ for predicate symbols; $c$ and $d$ for propositional connectives. An \emph{atomic formula} is an expression of the form $p(x_1, \ldots, x_n)$, where $p$ is an $n$-ary predicate symbol. The set $\Fml(\sC)$ of \emph{(first-order) formulas} is defined inductively as follows:
  \begin{itemize}
  \item if $\alpha$ is an atomic formula, then $\alpha \in \Fml(\sC)$;
  \item if $c \in \sC$ and $\alpha_1, \ldots, \alpha_{\arity(c)} \in \Fml(\sC)$, then $c(\alpha_1, \ldots, \alpha_{\arity(c)}) \in \Fml(\sC)$;
  \item if $\alpha \in \Fml(\sC)$ and $x$ is an individual variable, then $\forall x \alpha \in \Fml(\sC)$ and $\exists x \alpha \in \Fml(\sC)$.
  \end{itemize}
We shall use $\alpha$, $\beta$, $\gamma$, $\varphi$, $\psi$, $\sigma$, $\tau$ and $\chi$ as metavariables for formulas. The set $\FV(\alpha)$ of free variables of $\alpha$ is defined inductively as follows:
  \begin{align*}
  \FV(p(x_1, \ldots, x_n)) & = \{ x_1,\ldots, x_n \}; \\
  \FV(c(\alpha_1, \ldots, \alpha_{\arity(c)})) & = \FV(\alpha_1) \cup \cdots \cup \FV(\alpha_{\arity(c)}); \\
  \FV(\forall x \alpha) = \FV(\exists x \alpha) & = \FV(\alpha) \setminus \{ x \}.
  \end{align*}
A \emph{sequent} is an expression $\Gamma \To \Delta$, where $\Gamma$ and $\Delta$ are sets of formulas. We denote by $\FOSqt(\sC)$ the set $\{ \Gamma \To \Delta \mid \Gamma, \Delta \subseteq \Fml(\sC) \}$. If $\Gamma = \{ \alpha_1, \ldots, \alpha_n \}$ and $\Delta = \{ \beta_1, \ldots, \beta_m \}$, we often omit the braces and simply write $\alpha_1, \ldots, \alpha_n \To \beta_1, \ldots, \beta_m$ for $\{ \alpha_1, \ldots, \alpha_n \} \To \{ \beta_1, \ldots, \beta_m \}$. $\FV(\Gamma \To \Delta)$ denotes the set of free variables of formulas in $\Gamma \cup \Delta$.

Formulas which contain no predicate symbols except propositional symbols are said to be \emph{propositional}. We denote by $\PropFml(\sC)$ the set $\{ \alpha \in \Fml(\sC) \mid \text{$\alpha$ is propositional} \}$ and by $\Sqt(\sC)$ the set 
  \[
  \{ \Gamma \To \Delta \in \FOSqt(\sC) \mid \text{all formulas in $\Gamma \cup \Delta$ are propositional} \}. 
  \]
\subsection{Classical semantics}
\label{Subsection 2.3}
A \emph{(classical) model} $\sM$ is a tuple $\langle D, I \rangle$ in which
  \begin{itemize}
  \item $D$ is a non-empty set, called the \emph{individual domain};
  \item $I$ is a function, called the \emph{interpretation function}, which assigns to each $n$-ary predicate symbol a function from $D^n$ to $\{ 0,1 \}$.
  \end{itemize}
  An \emph{assignment} in $D$ is a function which assigns to each individual variable an element of $D$. For an assignment $\rho$ in $D$, an individual variable $x$ and an element $a \in D$, we write $\rho[x \mapsto a]$ for the assignment in $D$ which maps $x$ to $a$ and is equal to $\rho$ everywhere else.
For a model $\sM = \langle D, I \rangle$, a formula $\alpha \in \Fml(\sC)$ and an assignment in $D$, we define the \emph{interpretation} $\llbracket \alpha \rrbracket_{\sM}^{\rho}$ of $\alpha$ with respect to $\rho$ inductively as follows:
  \begin{itemize}
  \item $\llbracket p(x_1, \ldots, x_n) \rrbracket_{\sM}^{\rho} = I(p)(\rho(x_1), \ldots, \rho(x_n))$;
  \item $\llbracket c(\alpha_1, \ldots, \alpha_{\arity(c)}) \rrbracket_\sM^\rho = \ttfunc{c}(\llbracket \alpha_1 \rrbracket_{\sM}^\rho, \ldots, \llbracket \alpha_{\arity(c)} \rrbracket_\sM^\rho)$;
  \item $\llbracket \forall x \alpha \rrbracket_{\sM}^\rho = 1$ if and only if $\llbracket \alpha \rrbracket_{\sM}^{\rho[x \mapsto a]} = 1$ for all $a \in D$;
  \item $\llbracket \exists x \alpha \rrbracket_{\sM}^\rho = 1$ if and only if $\llbracket \alpha \rrbracket_{\sM}^{\rho[x \mapsto a]} = 1$ for some $a \in D$.
  \end{itemize}
The value of $\llbracket \alpha \rrbracket_{\sM}^\rho$ only depends on the values of $\rho$ on $\FV(\alpha)$. Hence, even for a partial function $\rho$ from the set of individual variables to $D$ whose domain includes $\FV(\alpha)$, $\llbracket \alpha \rrbracket_\sM^\rho$ can be defined to be the value $\llbracket \alpha \rrbracket_\sM^{\rho'}$ for any total function $\rho'$ from the set of individual variables to $D$ which is an extension of $\rho$. We call a partial function from the set of individual variables to an individual domain a \emph{partial assignment} . Even for a partial assignment $\rho$, we define $\rho[x \mapsto a]$ to be the function which maps $x$ to $a$ and is equal to $\rho$ on $\dom(\rho)\setminus \{ x\}$. We use $\emptyfunc$ to denote the empty assignment $\emptyset \to D$. For example, for a model $\langle D, I \rangle$ with $a, b \in D$, we have $\llbracket \bot \rrbracket_{\langle D, I \rangle}^{\emptyfunc} = 0$ and $\llbracket p(x,y) \rrbracket_{\langle D, I \rangle}^{\emptyfunc [x \mapsto a] [y \mapsto b]} = I(p)(a, b)$.

 If $\vec{\alpha}$ denotes a sequence of formulas $\alpha_1, \ldots, \alpha_n$, then we denote by $\llbracket \vec{\alpha} \rrbracket_{\sM}^{\rho}$ the sequence of interpretations of $\alpha_1, \ldots, \alpha_n$, $\langle \llbracket \alpha_1 \rrbracket_\sM^\rho, \ldots, \llbracket \alpha_n \rrbracket_{\sM}^{\rho} \rangle$. For example, if $\alpha \equiv c(\beta_1, \ldots, \beta_{\arity(c)})$ and $\vec{\beta} = \beta_1, \ldots, \beta_{\arity(c)}$, then $\llbracket \alpha \rrbracket_\sM^\rho = 1$ if and only if $\ttfunc{c}(\llbracket \vec{\beta} \rrbracket_\sM^\rho) = 1$.

A formula $\alpha \in \Fml(\sC)$ is \emph{valid} in a classical model $\sM = \langle D, I \rangle$ if $\llbracket \alpha \rrbracket_\sM^\rho = 1$ holds for all assignments $\rho$ in $D$. A formula $\alpha \in \Fml(\sC)$ is \emph{(classically) valid} if it is valid in all classical models. We denote by $\FOCL(\sC)$ the set $\{ \alpha \in \Fml(\sC) \mid \text{$\alpha$ is classically valid} \}$.

For a sequent $\Gamma \To \Delta \in \FOSqt(\sC)$, the \emph{interpretation} $\llbracket \Gamma \To \Delta \rrbracket_{\sM}^{\rho} \in \{0,1\}$ of $\Gamma \To \Delta$ with respect to $\rho$ is defined by
  \[
  \llbracket \Gamma \To \Delta \rrbracket_\sM^\rho =
    \begin{cases}
    0 & \text{if $\llbracket \alpha \rrbracket_\sM^\rho = 1$ for all $\alpha \in \Gamma$ and $\llbracket \beta \rrbracket_\sM^\rho = 0$ for all $\beta \in \Delta$} \\
    1 & \text{otherwise}.
    \end{cases}
  \]
A sequent $\Gamma \To \Delta \in \FOSqt(\sC)$ is \emph{valid} in a classical model $\sM = \langle D, I \rangle$ if $\llbracket \Gamma \To \Delta \rrbracket_\sM^\rho = 1$ holds for all assignments $\rho$ in $D$.
  A sequent $\Gamma \To \Delta \in \FOSqt(\sC)$ is \emph{(classically) valid} if it is valid in all classical models. We denote by $\FOCLS(\sC)$ the set $\{ \Gamma \To \Delta \in \FOSqt(\sC) \mid \text{$\Gamma \To \Delta$ is classically valid} \}$.
\subsection{Kripke semantics}
\label{Subsection 2.4}
A \emph{Kripke model} is a tuple $\langle W, \preceq, D, I \rangle$ in which
  \begin{itemize}
  \item $W$ is a non-empty set, called a set of \emph{possible worlds};
  \item $\preceq$ is a pre-order on $W$;
  \item $D$ is a function that assigns to each $w \in W$ a non-empty set $D(w)$, which is called the \emph{individual domain} at $w$. Furthermore, $D$ satisfies the monotonicity: for all $w, v \in W$, if $w \preceq v$ then $D(w) \subseteq D(v)$.
  \item $I$ is a function, called an \emph{interpretation function}, that assigns to each pair $\langle w, p \rangle$ of a possible world and an $n$-ary predicate symbol a function $I(w,p)$ from $D(w)^n$ to $\{0,1\}$. Furthermore, $I$ satisfies the \emph{hereditary condition}: for all $n$-ary predicate symbols $p$ and all $w, v \in W$, if $w \preceq v$ then $I(w,p) (a_1, \ldots, a_n) \leq I(v, p) (a_1, \ldots, a_n)$ holds for all $a_1, \ldots, a_n \in D(w)$.
  \end{itemize} 
An \emph{assignment} in $D(w)$ is a function which assigns to each individual variable an element of $D(w)$.  For an assignment $\rho$ in $D(w)$, an individual variable $x$ and an element $a \in D$, we write $\rho[x \mapsto a]$ for the assignment in $D(w)$ which maps $x$ to $a$ and is equal to $\rho$ everywhere else.  
For a Kripke model $\sK = \langle W, \preceq, D, I \rangle$, a possible world $w \in W$, an assignment $\rho$ in $D(w)$ and a formula $\alpha \in \Fml(\sC)$, we define the \emph{interpretation} $\| \alpha \|_{\sK, w}^{\rho} \in \{ 0, 1 \}$ of $\alpha$ at $w$ with respect to $\rho$ as follows:
  \begin{itemize}
  \item $\| p (x_1, \ldots, x_n) \|_{\sK, w}^\rho = I(w, p)(\rho(x_1), \ldots, \rho(x_n))$; 
  \item $\| c(\alpha_1, \ldots, \alpha_n) \|_{\sK, w}^\rho = 1$ if and only if $\ttfunc{c}(\| \alpha_1 \|_{\sK, v}^\rho, \ldots, \| \alpha_n \|_{\sK, v}^\rho) = 1$ for all $v \succeq w$;
  \item $\| \forall x \alpha \|_{\sK, w}^\rho = 1$ if and only if $\| \alpha \|_{\sK, v}^{\rho[x \mapsto a]} = 1$ for all $v \succeq w$ and all $a \in D(v)$;
  \item $\| \exists x \alpha \|_{\sK, w}^\rho = 1$ if and only if $\| \alpha \|_{\sK, w}^{\rho [x \mapsto a]} = 1$ for some $a \in D(w)$.
  \end{itemize}
Note that, in case $c = \land$ or $c = \lor$, the statement of the definition of $\| c(\alpha_1, \alpha_2) \|_{\sK, w}^\rho$ differs from the usual one, in which the interpretation is defined by the interpretations of $\alpha_1$ and $\alpha_2$ only at $w$, but we can easily verify that this definition is equivalent to the usual one.   

The value of $\llbracket \alpha \rrbracket_{\sK, w}^\rho$ only depends on the values of $\rho$ on $\FV(\alpha)$. Hence, even for a partial function $\rho$ from the set of individual variables to $D(w)$ whose domain includes $\FV(\alpha)$, $\llbracket \alpha \rrbracket_{\sK, w}^\rho$ can be defined to be  the value $\llbracket \alpha \rrbracket_{\sK, w}^{\rho'}$ for any total function $\rho'$ from the set of individual variables to $D(w)$ which is an extension of $\rho$. We call a partial function from the set of individual variables to an individual domain a \emph{partial assignment} . Even for a partial assignment $\rho$, we define $\rho[x \mapsto a]$ to be the function which maps $x$ to $a$ and is equal to $\rho$ on $\dom(\rho)\setminus \{ x\}$. We use $\emptyfunc$ to denote the empty assignment $\emptyset \to D(w)$. For example, for a Kripke model $\langle W, \preceq, D, I \rangle$, a possible world $w \in W$ and individuals $a, b \in D(w)$, we have $\llbracket \bot \rrbracket_{\sK, w}^{\emptyfunc} = 0$ and $\llbracket p(x,y) \rrbracket_{\sK, w}^{\emptyfunc [x \mapsto a] [y \mapsto b]} = I(w, p)(a, b)$.

 If $\vec{\alpha}$ denotes a sequence of formulas $\alpha_1, \ldots, \alpha_n$, then we denote by $\llbracket \vec{\alpha} \rrbracket_{\sK, w}^{\rho}$ the sequence of interpretations of $\alpha_1, \ldots, \alpha_n$, $\langle \llbracket \alpha_1 \rrbracket_{\sK, w}^\rho, \ldots, \llbracket \alpha_n \rrbracket_{\sK, w}^{\rho} \rangle$. For example, if $\alpha \equiv c(\beta_1, \ldots, \beta_{\arity(c)})$ and $\vec{\beta} = \beta_1, \ldots, \beta_{\arity(c)}$, then $\llbracket \alpha \rrbracket_{\sK, w}^\rho = 1$ if and only if $\ttfunc{c}(\llbracket \vec{\beta} \rrbracket_{\sK, v}^\rho) = 1$ for any $v \succeq w$.

A formula $\alpha \in \Fml(\sC)$ is \emph{valid} in a Kripke model $\sK = \langle W, \preceq, D, I \rangle$ if $\| \alpha \|_{\sK, w}^\rho = 1$ for any $w \in W$ and any assignment $\rho$ in $D(w)$. A formula $\alpha \in \Fml(\sC)$ is \emph{Kripke-valid} if it is valid in all Kripke models. We denote by $\FOIL(\sC)$ the set $\{ \alpha \in \Fml(\sC) \mid \text{$\alpha$ is Kripke-valid} \}$.

As in the case of the usual connectives,  the hereditary condition easily extends to any formula:
  \begin{lemma}\label{Lemma 1}
  For any formula $\alpha \in \Fml(\sC)$, any Kripke model $\sK = \langle W, \preceq, D, I \rangle$, any $w,v \in W$ and any assignment $\rho$ in $D(w)$, if $w \preceq v$ then $\| \alpha \|_{\sK, w}^\rho \leq \| \alpha \|_{\sK, w}^\rho$. 
  \end{lemma}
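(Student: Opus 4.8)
The plan is to argue by induction on the structure of $\alpha$ (correcting the evident typo in the statement: the claim is $\| \alpha \|_{\sK, w}^\rho \leq \| \alpha \|_{\sK, v}^\rho$ whenever $w \preceq v$). For the base case $\alpha \equiv p(x_1, \ldots, x_n)$, I would just unfold the definitions: $\| \alpha \|_{\sK, w}^\rho = I(w,p)(\rho(x_1), \ldots, \rho(x_n))$ and $\| \alpha \|_{\sK, v}^\rho = I(v,p)(\rho(x_1), \ldots, \rho(x_n))$, and since each $\rho(x_i) \in D(w)$ both sides are well-defined; the desired inequality is then exactly the hereditary condition imposed on $I$.

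For the connective case $\alpha \equiv c(\alpha_1, \ldots, \alpha_n)$, the key observation is that no induction hypothesis is needed. If $\| \alpha \|_{\sK, w}^\rho = 1$, then by definition $\ttfunc{c}(\| \alpha_1 \|_{\sK, u}^\rho, \ldots, \| \alpha_n \|_{\sK, u}^\rho) = 1$ for every $u \succeq w$; since $\preceq$ is a pre-order (hence transitive), every $u \succeq v$ also satisfies $u \succeq w$, so the same holds for every $u \succeq v$, i.e. $\| \alpha \|_{\sK, v}^\rho = 1$. This is precisely the point of building the quantification "for all $v \succeq w$'' into the clause for connectives in the generalized semantics. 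The case $\alpha \equiv \forall x \beta$ is handled identically: if $\| \forall x \beta \|_{\sK, w}^\rho = 1$, then $\| \beta \|_{\sK, u}^{\rho[x \mapsto a]} = 1$ for all $u \succeq w$ and all $a \in D(u)$, and restricting to those $u$ with $u \succeq v$ yields $\| \forall x \beta \|_{\sK, v}^\rho = 1$.

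The only case that genuinely invokes the induction hypothesis is $\alpha \equiv \exists x \beta$. Here, if $\| \exists x \beta \|_{\sK, w}^\rho = 1$, pick $a \in D(w)$ with $\| \beta \|_{\sK, w}^{\rho[x \mapsto a]} = 1$. By the monotonicity of $D$ we have $a \in D(v)$, so $\rho[x \mapsto a]$ is an assignment in $D(v)$, and the induction hypothesis applied to $\beta$ (with the pair $w \preceq v$ and the assignment $\rho[x \mapsto a]$) gives $\| \beta \|_{\sK, v}^{\rho[x \mapsto a]} = 1$, whence $\| \exists x \beta \|_{\sK, v}^\rho = 1$. I do not expect a real obstacle in this lemma; the only slightly delicate points are bookkeeping ones — ensuring that assignments remain well-defined at the larger world (which is exactly where the monotonicity of $D$ is used, in the $\exists$ case) and noticing that the connective and $\forall$ clauses propagate for free by transitivity of $\preceq$.
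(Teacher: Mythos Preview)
Your proof is correct and is the standard argument; in fact the paper does not give a proof of this lemma at all, merely stating it (with the typo you caught) and remarking ``We shall use this lemma without references.'' Your observation that the connective and $\forall$ clauses need no induction hypothesis---because the ``for all $v \succeq w$'' built into the generalized semantics propagates by transitivity of $\preceq$---is exactly the point, and the $\exists$ case is handled correctly via the induction hypothesis and monotonicity of $D$.
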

  We shall use this lemma without references.

For a Kripke model $\sK = \langle W, \preceq, D, I \rangle$, a possible world $w \in W$, an assignment $\rho$ in $D(w)$ and a sequent $\Gamma \To \Delta \in \FOSqt(\sC)$, the \emph{interpretation} $\| \Gamma \To \Delta \|_{\sK, w}^{\rho} \in \{0,1\}$ of $\Gamma \To \Delta$  at $w$ with respect to $\rho$ is defined by
  \[
  \| \Gamma \To \Delta \|_{\sK, w}^\rho =
    \begin{cases}
    0 & \text{if $\| \alpha \|_{\sK, w}^\rho = 1$ for all $\alpha \in \Gamma$ and $\| \beta \|_{\sK, w}^\rho = 0$ for all $\beta \in \Delta$} \\
    1 & \text{otherwise}.
    \end{cases}
  \]
  For a Kripke model $\sK = \langle W, \preceq, D, I \rangle$, a sequent $\Gamma \To \Delta \in \FOSqt(\sC)$ is \emph{valid} in $\sK$ if $\| \Gamma \To \Delta \|_{\sK,w}^\rho = 1$ for all $w \in W$ and all assignment $\rho$ in $D(w)$. A sequent $\Gamma \To \Delta \in \FOSqt(\sC)$ is \emph{Kripke-valid} if it is valid in all Kripke models. We denote by $\FOILS(\sC)$ the set $\{ \Gamma \To \Delta \in \FOSqt(\sC) \mid \text{$\Gamma \To \Delta$ is Kripke-valid} \}$

  A Kripke model $\sK = \langle W, \preceq, D, I \rangle$ is said to be \emph{constant domain} if $D(w) = D(v)$ for all $w, v \in W$. In this case, we simply write $D$ for $D(w)$ for any $w \in W$. Note that, for a constant domain Kripke model $\sK = \langle W, \preceq, D, I \rangle$, the interpretation of an universal formula may be defined only at the present world, that is: $\| \forall x \alpha \|_{\sK, w}^\rho = 1$ if and only if $\| \alpha \|_{\sK, w}^{\rho[x \mapsto a]} = 1$ for all $a \in D$. 
  A formula $\alpha \in \Fml(\sC)$ is \emph{CD-valid} if it is valid in all constant domain Kripke models. We denote by $\FOCD(\sC)$ the set $\{ \alpha \in \Fml(\sC) \mid \text{$\alpha$ is CD-valid} \}$.
  A sequent $\Gamma \To \Delta \in \Sqt(\sC)$ is \emph{CD-valid} if it is valid in all constant domain Kripke models. We denote by $\FOCDS(\sC)$ the set $\{ \Gamma \To \Delta \in \FOSqt \mid \text{$\Gamma \To \Delta$ is CD-valid}\}$.
    
The following lemma follows by the definition of $\FOCDS(\sC)$ and $\FOCLS(\sC)$:
  \begin{lemma}\label{Lemma 2}
  $\FOCDS(\sC) \subseteq \FOCLS(\sC)$ for any set $\sC$ of connectives.
  \end{lemma}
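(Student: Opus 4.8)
The plan is to exhibit every classical model as a particular constant domain Kripke model on which the two semantics coincide, and then read off the inclusion. First, given a classical model $\sM = \langle D, I \rangle$, I would form the Kripke model $\sK_{\sM} = \langle \{ w_0 \}, \preceq, D', I' \rangle$ whose frame is a single reflexive point $w_0$, with $D'(w_0) = D$ and $I'(w_0, p) = I(p)$ for every predicate symbol $p$. Since $\{ w_0 \}$ is a singleton, the monotonicity of $D'$ and the hereditary condition on $I'$ hold trivially, so $\sK_{\sM}$ is a legitimate constant domain Kripke model, and $w_0$ is the only world $v$ with $v \succeq w_0$.

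Next I would show, by induction on the structure of $\alpha \in \Fml(\sC)$, that $\| \alpha \|_{\sK_{\sM}, w_0}^{\rho} = \llbracket \alpha \rrbracket_{\sM}^{\rho}$ for every partial assignment $\rho$ in $D$ with $\FV(\alpha) \subseteq \dom(\rho)$. The atomic case is immediate from the choice of $I'$. In the case $\alpha \equiv c(\alpha_1, \ldots, \alpha_n)$ with $n = \arity(c)$, the quantification ``for all $v \succeq w_0$'' in the Kripke clause ranges over $\{ w_0 \}$ only, so $\| c(\alpha_1, \ldots, \alpha_n) \|_{\sK_{\sM}, w_0}^{\rho} = 1$ iff $\ttfunc{c}(\| \alpha_1 \|_{\sK_{\sM}, w_0}^{\rho}, \ldots, \| \alpha_n \|_{\sK_{\sM}, w_0}^{\rho}) = 1$, which by the induction hypothesis equals $\ttfunc{c}(\llbracket \alpha_1 \rrbracket_{\sM}^{\rho}, \ldots, \llbracket \alpha_n \rrbracket_{\sM}^{\rho}) = 1$, i.e. $\llbracket c(\alpha_1, \ldots, \alpha_n) \rrbracket_{\sM}^{\rho} = 1$. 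The cases $\alpha \equiv \forall x \beta$ and $\alpha \equiv \exists x \beta$ are analogous: again $v \succeq w_0$ forces $v = w_0$ and $D'(v) = D$, so the Kripke clauses collapse to the classical ones, and applying the induction hypothesis to $\beta$ with the assignments $\rho[x \mapsto a]$ for $a \in D$ closes the step.

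Finally, since the interpretation of a sequent $\Gamma \To \Delta$ is, in both semantics, the same Boolean function of the interpretations of its members, the formula-level identity yields $\| \Gamma \To \Delta \|_{\sK_{\sM}, w_0}^{\rho} = \llbracket \Gamma \To \Delta \rrbracket_{\sM}^{\rho}$ for every $\rho$; hence $\Gamma \To \Delta$ is valid in $\sM$ iff it is valid in $\sK_{\sM}$. Thus if $\Gamma \To \Delta \in \FOCDS(\sC)$, then, being valid in every constant domain Kripke model, it is valid in $\sK_{\sM}$ and hence in $\sM$; as $\sM$ was arbitrary, $\Gamma \To \Delta \in \FOCLS(\sC)$. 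I do not anticipate a real obstacle here: this is the standard embedding of classical models into one-world Kripke frames, and the only point needing (routine) care is checking that the successor-world quantifications in the Kripke clauses for the connectives and for $\forall$ genuinely degenerate when the frame is a single reflexive point.
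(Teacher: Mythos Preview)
Your argument is correct and is exactly the standard embedding of classical models as one-world constant domain Kripke frames; the paper itself does not spell out a proof at all, merely remarking that the lemma ``follows by the definition of $\FOCDS(\sC)$ and $\FOCLS(\sC)$,'' so your write-up is a faithful elaboration of what the paper leaves implicit.
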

\section{Condition for $\FOCDS(\sC)$ and $\FOCLS(\sC)$ to coincide}
\label{Section 3}
In this section, we show the following theorem:
  \begin{theorem}\label{Theorem 1}
  $\FOCDS(\sC) = \FOCLS(\sC)$ if and only if all connectives in $\sC$ are monotonic.
  \end{theorem}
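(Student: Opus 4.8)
The plan is to prove the two implications separately. The ``only if'' direction reduces quickly to the propositional theorem of \cite{kawano2021effect}, while the ``if'' direction carries the substantive new content and rests on the observation that, over a \emph{constant domain} frame built only from monotonic connectives, Kripke forcing collapses, world by world, to the classical interpretation.

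\medskip
\noindent\textbf{The ``if'' direction.} Assume every $c \in \sC$ is monotonic. By Lemma~\ref{Lemma 2} it suffices to show $\FOCLS(\sC) \subseteq \FOCDS(\sC)$, equivalently (contrapositively) that any sequent refuted in some constant domain Kripke model is already refuted in some classical model. So fix a constant domain Kripke model $\sK = \langle W, \preceq, D, I \rangle$ and $w \in W$, and let $\sM_w = \langle D, I_w \rangle$ be the classical model with $I_w(p) = I(w, p)$. The key step is a ``locality lemma'': for every $\alpha \in \Fml(\sC)$ and every assignment $\rho$ in $D$,
\[
\| \alpha \|_{\sK, w}^{\rho} = \llbracket \alpha \rrbracket_{\sM_w}^{\rho},
\]
proved by induction on $\alpha$. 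The atomic case is immediate. For $\alpha \equiv c(\alpha_1, \ldots, \alpha_n)$ with $c$ monotonic, I would first prove the auxiliary identity $\| c(\alpha_1, \ldots, \alpha_n) \|_{\sK, w}^{\rho} = \ttfunc{c}\bigl(\| \alpha_1 \|_{\sK, w}^{\rho}, \ldots, \| \alpha_n \|_{\sK, w}^{\rho}\bigr)$: instantiating $v \succeq w$ with $v = w$ in the generalized clause for $c$ gives ``$\leq$'', and for ``$\geq$'' note that by Lemma~\ref{Lemma 1} the tuple $\langle \| \alpha_1 \|_{\sK, v}^{\rho}, \ldots, \| \alpha_n \|_{\sK, v}^{\rho} \rangle$ is $\sqsubseteq$-increasing in $v$, so monotonicity of $\ttfunc{c}$ lets the value $\ttfunc{c}(\| \alpha_1 \|_{\sK, w}^{\rho}, \ldots) = 1$ propagate to every $v \succeq w$. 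The induction hypothesis and the classical clause for $c$ then close the case. For $\alpha \equiv \forall x \beta$, the constant domain hypothesis is exactly what is needed: as remarked in \S\ref{Section 2}, in a constant domain model $\| \forall x \beta \|_{\sK, w}^{\rho} = 1$ iff $\| \beta \|_{\sK, w}^{\rho[x \mapsto a]} = 1$ for all $a \in D$; applying the induction hypothesis to $\beta$ (with $\rho[x\mapsto a]$) and the classical clause for $\forall$ finishes it. The case $\alpha \equiv \exists x \beta$ is already local on both sides. The locality lemma at once yields $\| \Gamma \To \Delta \|_{\sK, w}^{\rho} = \llbracket \Gamma \To \Delta \rrbracket_{\sM_w}^{\rho}$, so a refutation of $\Gamma \To \Delta$ at $w$ in $\sK$ is a refutation in $\sM_w$, as required.

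\medskip
\noindent\textbf{The ``only if'' direction.} Suppose some $c \in \sC$ is not monotonic. Since a one-world Kripke model is a classical model we always have $\ILS(\sC) \subseteq \CLS(\sC)$, so the propositional theorem of \cite{kawano2021effect} furnishes a propositional sequent $\Gamma \To \Delta \in \Sqt(\sC)$ that is classically valid but not Kripke-valid. Two easy liftings finish the argument. First, $\Gamma \To \Delta \in \FOCLS(\sC)$: in any first-order classical model $\langle D, I \rangle$ the interpretation of a propositional sequent depends only on the values $I(p)()$ at propositional symbols, which constitute an ordinary propositional valuation, so propositional classical validity transfers upward. Second, $\Gamma \To \Delta \notin \FOCDS(\sC)$: starting from a propositional Kripke model $\langle W, \preceq, V \rangle$ refuting $\Gamma \To \Delta$ at some $w$, expand it to a first-order Kripke model over the singleton domain $D = \{ * \}$ by putting $I(v, p)() = V(v)(p)$ for propositional symbols $p$ and, say, $I(v, p) \equiv 0$ for predicate symbols of positive arity; the hereditary condition is inherited from $V$, this model is trivially constant domain, and a routine induction on propositional formulas shows forcing values are unchanged, so $\Gamma \To \Delta$ is still refuted. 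Hence $\FOCDS(\sC) \subsetneq \FOCLS(\sC)$.

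\medskip
\noindent\textbf{Main obstacle.} The crux is the locality lemma, and inside it the monotonic-connective step: one must combine the hereditary condition with monotonicity of $\ttfunc{c}$ to see that the universal quantifier over successors ``$\forall v \succeq w$'' in the generalized Kripke clause is vacuous. It is worth stressing — and this is the conceptual reason the theorem concerns $\FOCDS$ rather than $\FOILS$ — that the $\forall$-case of the induction genuinely breaks without constant domains, because $\| \forall x \beta \|_{\sK, w}^{\rho}$ would then be sensitive to the larger domains $D(v)$ at worlds $v \succeq w$.
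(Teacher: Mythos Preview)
Your proof is correct and the ``if'' direction is exactly the paper's approach: your locality lemma is the paper's Lemma~\ref{Lemma 3}, with the same induction and the same use of monotonicity plus heredity in the connective case and of the constant-domain hypothesis in the $\forall$-case.

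For the ``only if'' direction you take a route the paper explicitly acknowledges but does not follow: the paper notes that Proposition~\ref{Proposition 2} reduces to the propositional Proposition~\ref{Proposition 3} via precisely the two liftings you describe (propositional sequents lie in $\ILS(\sC)$ iff they lie in $\FOCDS(\sC)$, and in $\CLS(\sC)$ iff in $\FOCLS(\sC)$), and then, ``for the purpose of self-containedness,'' gives instead a direct case-by-case construction of an explicit witness sequent depending on the values $\ttfunc{c}(\bzero)$ and $\ttfunc{c}(\bone)$. Your reduction is shorter and perfectly valid; the paper's direct argument has the advantage of exhibiting the separating sequent concretely without appealing to the earlier propositional result.
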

We show the ``if'' part in \S \ref{Subsection 3.1} and the ``only if'' part in \S \ref{Subsection 3.2}.

\subsection{The ``if'' part}
\label{Subsection 3.1}
Here, we show the ``if'' part of Theorem \ref{Theorem 1}:
  \begin{proposition}\label{Proposition 1}
  If all connectives in $\sC$ are monotonic, then $\FOCDS(\sC) = \FOCLS(\sC)$.
  \end{proposition}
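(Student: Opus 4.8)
The plan is to lean on Lemma~\ref{Lemma 2}, which already gives $\FOCDS(\sC) \subseteq \FOCLS(\sC)$ unconditionally, and to prove only the reverse inclusion under the monotonicity hypothesis, arguing by contraposition. So I would fix a constant domain Kripke model $\sK = \langle W, \preceq, D, I \rangle$, a world $w \in W$, and an assignment $\rho$ in $D$ with $\| \Gamma \To \Delta \|_{\sK, w}^\rho = 0$, that is, $\| \alpha \|_{\sK, w}^\rho = 1$ for all $\alpha \in \Gamma$ and $\| \beta \|_{\sK, w}^\rho = 0$ for all $\beta \in \Delta$, and then build a classical model refuting the same sequent. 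The natural candidate is $\sM = \langle D, I_w \rangle$, with the very same individual domain $D$ and with $I_w(p) := I(w, p)$ for each predicate symbol $p$; any assignment in $D$ is then simultaneously an assignment for $\sK$ at $w$ and for $\sM$.

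The core of the argument would be the claim that $\llbracket \alpha \rrbracket_\sM^\sigma = \| \alpha \|_{\sK, w}^\sigma$ for every $\alpha \in \Fml(\sC)$ and every assignment $\sigma$ in $D$, proved by induction on the structure of $\alpha$. The atomic case is immediate from the definition of $I_w$. For $\alpha = \exists x \beta$ it follows from the induction hypothesis applied to $\beta$ under the assignments $\sigma[x \mapsto a]$, together with the fact that the existential clause of the Kripke semantics mentions only $w$ and $D = D(w)$ and so matches the classical existential clause verbatim. The case $\alpha = \forall x \beta$ is handled the same way, but now using the simplified universal clause for constant domain models recorded in \S\ref{Subsection 2.4}, namely $\| \forall x \beta \|_{\sK, w}^\sigma = 1$ iff $\| \beta \|_{\sK, w}^{\sigma[x \mapsto a]} = 1$ for all $a \in D$; this is the single spot where the constant domain assumption is essential, since without it a universal formula can be forced at $w$ and yet fail at a later world over a strictly larger domain, which is exactly why the analogous statement for $\FOILS(\sC)$ breaks down.

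The interesting case, and the only one using monotonicity, is $\alpha = c(\beta_1, \ldots, \beta_n)$ with $c \in \sC$. Writing $\vec\beta$ for $\beta_1, \ldots, \beta_n$, the induction hypothesis reduces the goal to the identity $\| c(\vec\beta) \|_{\sK, w}^\sigma = \ttfunc{c}(\| \vec\beta \|_{\sK, w}^\sigma)$. If $\ttfunc{c}(\| \vec\beta \|_{\sK, w}^\sigma) = 1$, then for every $v \succeq w$ Lemma~\ref{Lemma 1} gives $\| \vec\beta \|_{\sK, w}^\sigma \sqsubseteq \| \vec\beta \|_{\sK, v}^\sigma$, so monotonicity of $\ttfunc{c}$ forces $\ttfunc{c}(\| \vec\beta \|_{\sK, v}^\sigma) = 1$; by the definition of the Kripke interpretation of a connective this means $\| c(\vec\beta) \|_{\sK, w}^\sigma = 1$. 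Conversely, if $\| c(\vec\beta) \|_{\sK, w}^\sigma = 1$, instantiating the defining condition at $v = w$, which is legitimate since $\preceq$ is reflexive, gives $\ttfunc{c}(\| \vec\beta \|_{\sK, w}^\sigma) = 1$; hence the identity holds. With the claim established, applying it to every formula of $\Gamma \cup \Delta$ yields $\llbracket \Gamma \To \Delta \rrbracket_\sM^\rho = 0$, so $\Gamma \To \Delta \notin \FOCLS(\sC)$, which completes the contrapositive and therefore Proposition~\ref{Proposition 1}. I expect the connective case to be the only genuinely non-routine step; the point to watch is that, although the Kripke clause for $c$ quantifies over all $v \succeq w$, monotonicity of $\ttfunc{c}$ combined with Lemma~\ref{Lemma 1} collapses this to the value at $w$, while reflexivity of $\preceq$ supplies the reverse, so the induction hypothesis is never needed at worlds other than $w$.
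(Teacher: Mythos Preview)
Your proof is correct and follows essentially the same route as the paper: the paper isolates your inductive identity $\llbracket \alpha \rrbracket_\sM^\sigma = \| \alpha \|_{\sK, w}^\sigma$ as a separate Lemma~\ref{Lemma 3} and then argues the inclusion directly rather than by contraposition, but the classical model, the induction, and the use of monotonicity together with Lemma~\ref{Lemma 1} in the connective case are identical. The only presentational difference is that the paper packages the reduction of the Kripke connective clause to its value at $w$ as a one-line consequence of $\ttfunc{c}(\| \vec\beta \|_{\sK, w}^{\rho}) \leq \ttfunc{c}(\| \vec\beta \|_{\sK, v}^{\rho})$, whereas you spell out both implications explicitly.
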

The following lemma is essential for the proof of this proposition.
  \begin{lemma}\label{Lemma 3}
  Suppose all connectives in $\sC$ are monotonic. Let $\sK = \langle W, \preceq, D, I \rangle$ be a constant domain Kripke model and $w \in W$. Let $\sM_{\sK, w} = \langle D, J_{\sK, w} \rangle$ be the classical model defined by $J_{\sK, w}(p) = I(p, w)$. Then, for any formula $\alpha \in \Fml(\sC)$ and any assignment $\rho$ in $D$, $\| \alpha \|_{\sK, w}^\rho = \llbracket \alpha \rrbracket_{\sM_{\sK, w}}^\rho$ holds.
  \end{lemma}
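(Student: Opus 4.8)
The plan is to establish the identity $\| \alpha \|_{\sK, w}^\rho = \llbracket \alpha \rrbracket_{\sM_{\sK, w}}^\rho$ by induction on the structure of $\alpha$, uniformly in the assignment $\rho$ in $D$ (uniformity is needed because the quantifier clauses pass to the extended assignment $\rho[x \mapsto a]$). Of the four inductive clauses, the atomic case and the $\exists$ case are purely routine; the whole content of the lemma is concentrated in the clause for propositional connectives, where monotonicity of the connectives enters, and in the clause for $\forall$, where the constant domain assumption enters.

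For the base case $\alpha \equiv p(x_1, \ldots, x_n)$ the equality is immediate from the definition of $J_{\sK, w}$, since
  \[
  \| p(x_1, \ldots, x_n) \|_{\sK, w}^\rho = I(w, p)(\rho(x_1), \ldots, \rho(x_n)) = J_{\sK, w}(p)(\rho(x_1), \ldots, \rho(x_n)) = \llbracket p(x_1, \ldots, x_n) \rrbracket_{\sM_{\sK, w}}^\rho .
  \]
For the connective case $\alpha \equiv c(\beta_1, \ldots, \beta_{\arity(c)})$, I would first isolate the auxiliary fact that, when $c$ is monotonic, $\| c(\beta_1, \ldots, \beta_{\arity(c)}) \|_{\sK, w}^\rho = \ttfunc{c}(\| \beta_1 \|_{\sK, w}^\rho, \ldots, \| \beta_{\arity(c)} \|_{\sK, w}^\rho)$. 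The inequality ``$\leq$'' follows from the definition of the Kripke interpretation of a connective by instantiating the universally quantified $v \succeq w$ with $w$ itself. For ``$\geq$'', suppose $\ttfunc{c}(\| \beta_1 \|_{\sK, w}^\rho, \ldots, \| \beta_{\arity(c)} \|_{\sK, w}^\rho) = 1$; for an arbitrary $v \succeq w$, Lemma \ref{Lemma 1} gives $\| \beta_i \|_{\sK, w}^\rho \leq \| \beta_i \|_{\sK, v}^\rho$ for each $i$, hence $\langle \| \beta_1 \|_{\sK, w}^\rho, \ldots, \| \beta_{\arity(c)} \|_{\sK, w}^\rho \rangle \sqsubseteq \langle \| \beta_1 \|_{\sK, v}^\rho, \ldots, \| \beta_{\arity(c)} \|_{\sK, v}^\rho \rangle$, so monotonicity of $\ttfunc{c}$ forces $\ttfunc{c}(\| \beta_1 \|_{\sK, v}^\rho, \ldots, \| \beta_{\arity(c)} \|_{\sK, v}^\rho) = 1$; as $v$ was arbitrary, $\| c(\beta_1, \ldots, \beta_{\arity(c)}) \|_{\sK, w}^\rho = 1$. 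Combining this with the induction hypotheses $\| \beta_i \|_{\sK, w}^\rho = \llbracket \beta_i \rrbracket_{\sM_{\sK, w}}^\rho$ and the clause defining $\llbracket c(\beta_1, \ldots, \beta_{\arity(c)}) \rrbracket_{\sM_{\sK, w}}^\rho$ closes the case.

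The $\exists$ case is a direct translation: $\| \exists x \alpha \|_{\sK, w}^\rho = 1$ iff $\| \alpha \|_{\sK, w}^{\rho[x \mapsto a]} = 1$ for some $a \in D$, which by the induction hypothesis is equivalent to $\llbracket \alpha \rrbracket_{\sM_{\sK, w}}^{\rho[x \mapsto a]} = 1$ for some $a \in D$, that is, to $\llbracket \exists x \alpha \rrbracket_{\sM_{\sK, w}}^\rho = 1$. For the $\forall$ case I would invoke the remark recorded just before the lemma: in a constant domain model, $\| \forall x \alpha \|_{\sK, w}^\rho = 1$ iff $\| \alpha \|_{\sK, w}^{\rho[x \mapsto a]} = 1$ for all $a \in D$, the nontrivial implication passing from $w$ to an arbitrary $v \succeq w$ using $D(v) = D$ together with Lemma \ref{Lemma 1}. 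Applying the induction hypothesis to $\alpha$ then identifies this with $\llbracket \forall x \alpha \rrbracket_{\sM_{\sK, w}}^\rho = 1$, completing the induction.

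I expect the connective clause to be the only genuine obstacle, since it is the unique place where the monotonicity hypothesis is indispensable: for a non-monotonic $c$ such as $\lnot$ the auxiliary fact fails, and that failure is precisely what will be exploited in the ``only if'' direction of Theorem \ref{Theorem 1}. The $\forall$ clause is the one further point where a semantic restriction is used, but it is already packaged cleanly in the preceding remark and should cause no trouble. With Lemma \ref{Lemma 3} in hand, Proposition \ref{Proposition 1} follows immediately: given a classically valid sequent and any world $w$ of any constant domain Kripke model $\sK$, applying the lemma to each of its formulas yields $\| \Gamma \To \Delta \|_{\sK, w}^\rho = \llbracket \Gamma \To \Delta \rrbracket_{\sM_{\sK, w}}^\rho = 1$, so the sequent is CD-valid, while the reverse inclusion is Lemma \ref{Lemma 2}.
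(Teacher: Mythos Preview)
Your proof is correct and follows essentially the same approach as the paper: structural induction on $\alpha$, with the connective clause handled via hereditariness plus monotonicity to reduce the Kripke evaluation of $c(\vec{\beta})$ at $w$ to $\ttfunc{c}(\|\vec{\beta}\|_{\sK,w}^\rho)$, and the $\forall$ clause handled via the constant-domain remark that evaluation at $w$ alone suffices. The only difference is presentational---you split the connective step into two inequalities while the paper compresses it into a single chain---and your reuse of the letter $\alpha$ for both the formula under induction and the body of the quantifier is a minor notational slip, not a mathematical one.
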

  \begin{proof}
  The proof proceeds by induction on $\alpha$. The base case, in which $\alpha$ is atomic, immediately follows by the definition of $J_{\sK, w}$. Now, we show the inductive step by cases of the form of $\alpha$. 
  
  \verticalspace
  
  \textsc{Case} 1: $\alpha$ is of the form $c(\beta_1, \ldots, \beta_{\arity(c)})$. Put $\vec{\beta} = \beta_1, \ldots, \beta_{\arity(c)}$. By the hereditary, we have $\| \vec{\beta} \|_{\sK, w}^\rho \sqsubseteq \| \vec{\beta} \|_{\sK, v}^\rho$  for all $v \succeq w$. Hence, since $c$ is monotonic, we have $\ttfunc{c}(\| \vec{\beta} \|_{\sK, w}^{\rho}) \leq \ttfunc{c}(\| \vec{\beta} \|_{\sK, v}^{\rho})$ for all $v \succeq w$, so that $\| \alpha \|_{\sK, w}^\rho = \ttfunc{c}(\| \vec{\beta} \|_{\sK, w}^\rho)$ holds. On the other hand, by the induction hypothesis, we have $\ttfunc{c}(\| \vec{\beta} \|_{\sK, w}^\rho) = \ttfunc{c}(\| \vec{\beta} \|_{\sM_{\sK, w}}^\rho) = \llbracket \alpha \rrbracket_{\sM_{\sK, w}}^\rho$.
  
  \verticalspace
  
  \textsc{Case} 2: $\alpha$ is of the form $\forall x \beta$. In this case, we have
    \begin{alignat*}{2}
    \| \alpha \|_{\sK, w}^\rho & = \min_{a \in D} \| \beta \|_{\sK, w}^{\rho [x \mapsto a]} \\
    & = \min_{a \in D} \| \beta \|_{\sM_{\sK, w}}^{\rho [x \mapsto a]} & \quad & \text{(by the induction hypothesis)} \\
    & = \| \alpha \|_{\sM_{\sK, w}}^\rho.
    \end{alignat*}
    
  \verticalspace
    
  \textsc{Case} 3: $\alpha$ is of the form $\exists x \beta$. In this case, we have
    \begin{alignat*}{2}
    \| \alpha \|_{\sK, w}^\rho & = \max_{a \in D} \| \beta \|_{\sK, w}^{\rho [x \mapsto a]} \\
    & = \max_{a \in D} \| \beta \|_{\sM_{\sK, w}}^{\rho [x \mapsto a]} & \quad & \text{(by the induction hypothesis)} \\
    & = \| \alpha \|_{\sM_{\sK, w}}^\rho.
    \end{alignat*}
  \qed  
  \end{proof}
  
Using this lemma, we prove Proposition \ref{Proposition 1}. 
  \begin{proof}[of Proposition \ref{Proposition 1}]
  Suppose all connectives in $\sC$ are monotonic. By Lemma \ref{Lemma 2}, it suffices to show $\FOCLS(\sC) \subseteq \FOCDS(\sC)$. In order to show this inclusion, we suppose $\Gamma \To \Delta \in \FOCLS(\sC)$, and show that $\| \Gamma \To \Delta \|_{\sK, w}^\rho = 1$ holds for any constant domain Kripke model $\sK = \langle W, \preceq, D, I \rangle$, any possible world $w \in W$ and any assignment $\rho$ in $D$. By Lemma \ref{Lemma 3}, it holds that $\| \Gamma \To \Delta \|_{\sK, w}^\rho = \llbracket \Gamma \To \Delta \rrbracket_{\sM_{\sK, w}}^\rho$ for any such $\sK$, $w$ and $\rho$. For any such $\sK$, $w$ and $\rho$, since $\Gamma \To \Delta \in \FOCLS(\sC)$, we have $\llbracket \Gamma \To \Delta \rrbracket_{\sM_{\sK, w}}^\rho = 1$, and hence, we have $\| \Gamma \To \Delta \|_{\sK, w}^\rho = 1$.
  \qed
  \end{proof}
\subsection{The ``only if'' part}
\label{Subsection 3.2}
Here, we show the ``only if'' part of Theorem \ref{Theorem 1} by showing its contraposition:
  \begin{proposition}\label{Proposition 2}
   If $\sC$ has a non-monotonic connective, then $\FOCLS(\sC) \setminus \FOCDS(\sC) \neq \emptyset$.
   \end{proposition}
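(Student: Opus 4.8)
The plan is to avoid constructing a counterexample from scratch and instead reduce Proposition~\ref{Proposition 2} to the propositional theorem of \cite{kawano2021effect}. Assume $\sC$ has a non-monotonic connective. Every single-world Kripke model is, in effect, a classical valuation, so $\ILS(\sC) \subseteq \CLS(\sC)$; hence the propositional theorem yields $\ILS(\sC) \subsetneq \CLS(\sC)$, and we may fix a quantifier-free propositional sequent $S \in \CLS(\sC) \setminus \ILS(\sC)$, viewed as an element of $\FOSqt(\sC)$ via $\Sqt(\sC) \subseteq \FOSqt(\sC)$. It then suffices to prove (i)~$S \in \FOCLS(\sC)$ and (ii)~$S \notin \FOCDS(\sC)$.

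For~(i), I would take an arbitrary first-order classical model $\sM = \langle D, I \rangle$ and assignment $\rho$ in $D$, note that each propositional symbol $p$ gets $I(p)\colon D^{0}\to\{0,1\}$ which is simply a truth value, collect these into a propositional classical valuation $V$, and verify by a routine induction that $\llbracket\varphi\rrbracket_{\sM}^{\rho}$ equals the value of $\varphi$ under $V$ for every propositional formula $\varphi$ in $S$. Then $\llbracket S \rrbracket_{\sM}^{\rho}=1$ because $S\in\CLS(\sC)$, and since $\sM,\rho$ were arbitrary, $S\in\FOCLS(\sC)$.

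For~(ii), since $S\notin\ILS(\sC)$, I would fix a propositional Kripke model $\langle W,\preceq,V\rangle$ and a world $w_{0}$ at which the interpretation of $S$ is $0$, and lift it to a Kripke model $\sK=\langle W,\preceq,D,I\rangle$ with $D(w)=\{*\}$ for all $w$, with $I(w,p)=V(w,p)$ for each propositional symbol $p$, and with $I(w,p)$ the constant-$0$ function for each predicate symbol of positive arity. This $\sK$ is a constant domain Kripke model: $D$ is constant, monotonicity of domains is trivial, and the hereditary condition is exactly the monotonicity of $V$ on propositional symbols and is trivial elsewhere. A routine induction --- in which the connective clause is literally the propositional one and the quantifier clauses are trivial because $D$ is a singleton --- then shows $\|\varphi\|_{\sK,w}^{\rho}$ equals the propositional interpretation of $\varphi$ at $w$ for every propositional $\varphi$, every $w\in W$, and the unique assignment $\rho$ in $D$. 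Hence $\|S\|_{\sK,w_{0}}^{\rho}=0$, so $S$ is not CD-valid, i.e.\ $S\notin\FOCDS(\sC)$; together with~(i) this gives $S\in\FOCLS(\sC)\setminus\FOCDS(\sC)$, as required.

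I do not expect a genuine obstacle: the mathematical content sits in the cited propositional theorem, and what remains are the two faithfulness inductions, which are routine. The only points demanding care are checking that the lifted model is really constant domain and hereditary, and extracting $S$ from $\CLS(\sC)\setminus\ILS(\sC)$ rather than merely knowing $\CLS(\sC)\neq\ILS(\sC)$ (this is why the inclusion $\ILS(\sC)\subseteq\CLS(\sC)$ is invoked). If instead a self-contained argument is preferred, closer to ``extending the proof'' as promised in the introduction, one can import the explicit propositional counterexample: for a non-monotonic $c\in\sC$, reduce along a chain to $\ba\sqsubseteq\bb$ differing in exactly one coordinate $i_{0}$ with $\ttfunc{c}(\ba)=1$ and $\ttfunc{c}(\bb)=0$, take distinct propositional symbols $q,r,s$, set $\gamma_{i_{0}}=q$ and $\gamma_{i}=r$ or $\gamma_{i}=s$ according as $\ba[i]=1$ or $\ba[i]=0$ for $i\neq i_{0}$, and check directly that the sequent $r\To s,q,c(\gamma_{1},\dots,\gamma_{\arity(c)})$ is classically valid but is falsified at the root of the two-world constant domain Kripke model in which $q$ jumps from $0$ to $1$ while $r$ and $s$ stay at $1$ and $0$.
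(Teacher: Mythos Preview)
Your main approach is correct and is precisely the reduction the paper itself sketches immediately before its proof: it states that Proposition~\ref{Proposition 2} follows from Proposition~\ref{Proposition 3} via the two equivalences $\Gamma\To\Delta\in\ILS(\sC)\Leftrightarrow\Gamma\To\Delta\in\FOCDS(\sC)$ and $\Gamma\To\Delta\in\CLS(\sC)\Leftrightarrow\Gamma\To\Delta\in\FOCLS(\sC)$ for propositional sequents, and then sets this aside ``for the purpose of self-containedness''. Your steps~(i) and~(ii) are exactly a fleshed-out version of those two equivalences, and the singleton-domain lifting is the right way to make~(ii) precise.

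The paper's own proof, however, is the direct construction, and there your alternative is genuinely different and considerably more economical. The paper splits into four cases according to $(\ttfunc{c}(\bzero),\ttfunc{c}(\bone))$, and in the hardest cases builds three-level nested formulas $\sigma,\psi,\varphi$ (with further subcases) to obtain a \emph{single} classically valid formula, or a sequent with one formula on each side, that fails in a two-world model. Your trick of first refining the witnessing pair to $\ba\sqsubseteq\bb$ differing in a single coordinate, and then exploiting the full sequent format $r\To s,q,c(\gamma_1,\dots,\gamma_{\arity(c)})$ with two extra propositional letters $r,s$ to simulate a constantly-true and a constantly-false subformula, bypasses all of that case analysis. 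What the paper's longer route buys is that its witnesses are (in most cases) of the form ${}\To\varphi$ or $\varphi\To\chi$, so one also separates $\FOCL(\sC)$ from $\FOCD(\sC)$ at the level of single formulas; your sequent with three succedents does not immediately give that, but it suffices for the proposition as stated.
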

   
    In \cite{kawano2021effect}, the following corresponding claim was shown in the case of propositional logic:
  \begin{proposition}\label{Proposition 3}
  If $\sC$ has a non-monotonic connective, then $\CLS(\sC) \setminus \ILS(\sC) \neq \emptyset$.
  \end{proposition}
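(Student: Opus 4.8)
The plan is to turn non-monotonicity into an explicit witness and then hand-craft a single propositional sequent separating $\CLS(\sC)$ from $\ILS(\sC)$; since all formulas involved are propositional, assignments play no role and I suppress them, writing $\llbracket\alpha\rrbracket_\sM$ and $\|\alpha\|_{\sK,w}$. First I would unpack the hypothesis: as $\sC$ has a non-monotonic connective $c$ with $n=\arity(c)$, there are $\ba,\bb\in\{0,1\}^n$ with $\ba\sqsubseteq\bb$ but $\ttfunc{c}(\ba)\not\leq\ttfunc{c}(\bb)$, which over $\{0,1\}$ forces $\ttfunc{c}(\ba)=1$ and $\ttfunc{c}(\bb)=0$ (and in particular $\ba\neq\bb$). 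Fixing $n$ distinct propositional symbols $p_1,\ldots,p_n$ and writing $\vec p=p_1,\ldots,p_n$, I would take as the separating sequent
\[
S \ \equiv \ \{\, p_i \mid \ba[i]=1 \,\} \To \{\, p_i \mid \ba[i]=0 \,\}\cup\{\, c(p_1,\ldots,p_n)\,\},
\]
whose antecedent records the coordinates where $\ba$ is $1$ and whose succedent records those where $\ba$ is $0$ together with the compound $c(\vec p)$; morally, $S$ asserts that if the $p_i$ realise exactly the pattern $\ba$ then $c(\vec p)$ cannot be false.

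Next I would verify $S\in\CLS(\sC)$. Suppose toward a contradiction that some classical model $\sM$ falsifies $S$, i.e.\ makes every antecedent formula true and every succedent formula false. Truth of the antecedent forces $\llbracket p_i\rrbracket_\sM=1$ whenever $\ba[i]=1$, and falsity of the atomic part of the succedent forces $\llbracket p_i\rrbracket_\sM=0$ whenever $\ba[i]=0$; together these pin the interpretation of $\vec p$ down to exactly $\ba$, so $\llbracket c(\vec p)\rrbracket_\sM=\ttfunc{c}(\ba)=1$. This contradicts $c(\vec p)$ being false in the succedent, so no falsifying model exists and $S$ is classically valid.

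Finally I would exhibit a Kripke counter-model. Take $W=\{w_0,w_1\}$ with $w_0\preceq w_1$, a singleton (hence constant) domain, and the valuation $\|p_i\|_{\sK,w_0}=\ba[i]$ and $\|p_i\|_{\sK,w_1}=\bb[i]$; the hereditary condition holds precisely because $\ba\sqsubseteq\bb$. At $w_0$ every antecedent $p_i$ (those with $\ba[i]=1$) is true and every atomic succedent $p_i$ (those with $\ba[i]=0$) is false, so it remains only to check $\|c(\vec p)\|_{\sK,w_0}=0$: since $w_1\succeq w_0$ and $\ttfunc{c}(\|\vec p\|_{\sK,w_1})=\ttfunc{c}(\bb)=0$, the defining clause for $c$ (which quantifies over all $v\succeq w_0$) fails at $w_0$. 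Hence $\|S\|_{\sK,w_0}=0$, so $S\notin\ILS(\sC)$, and therefore $S\in\CLS(\sC)\setminus\ILS(\sC)$.

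The one genuinely delicate point is the design of $S$: the antecedent and the atomic part of the succedent must jointly force the classical interpretation of $\vec p$ to be the single pattern $\ba$, and this is exactly what makes classical validity hold while the two-world model, exploiting the future value $\bb$, breaks the intuitionistic clause for the non-monotonic $c$. Everything else is a direct unwinding of the two semantics, so I expect no further obstacle.
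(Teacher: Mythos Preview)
Your proof is correct and considerably more direct than the paper's route. The paper does not prove Proposition~3 itself but cites \cite{kawano2021effect}; its self-contained proof of Proposition~\ref{Proposition 2} (which, since all formulas there are propositional and the counter-models have constant domain, doubles as a proof of Proposition~3) proceeds by a four-way case split on the pair $(\ttfunc{c}(\bzero),\ttfunc{c}(\bone))$ and in each case builds nested formulas $\sigma,\psi,\varphi,\ldots$ so that the separating sequent has at most one formula on each side (in Case~(d) even a single formula in $\FOCL(\sC)\setminus\FOCD(\sC)$). You instead exploit the full generality of multi-conclusion sequents: by placing the atoms $p_i$ with $\ba[i]=1$ in the antecedent and those with $\ba[i]=0$ in the succedent, you force any classical falsifier to realise the exact pattern $\ba$, and a single two-world model with pattern $\bb$ upstairs handles the Kripke side uniformly, with no case analysis. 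What your argument buys is simplicity and uniformity; what the paper's argument buys is the sharper conclusion that the separation is already witnessed by sequents of the form $\alpha\To\beta$ (and sometimes ${}\To\varphi$), which would matter if one restricted attention to single-formula sequents or to validity of formulas rather than sequents.
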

Here, $\ILS(\sC)$ denotes the set of  propositional sequents $\Gamma \To \Delta \in \Sqt(\sC)$ which are valid in all Kripke models for intuitionistic propositional logic and $\CLS(\sC)$ denotes the set of propositional sequents $\Gamma \To \Delta \in \Sqt(\sC)$ which are valid in all models for classical propositional logic. Actually, Proposition \ref{Proposition 2} follows from Proposition \ref{Proposition 3}, because the followings hold:
  \begin{itemize}
  \item For any $\Gamma \To \Delta \in \Sqt(\sC)$, $\Gamma \To \Delta \in \ILS(\sC)$ if and only if $\Gamma \To \Delta \in \FOCDS(\sC)$.
  \item For any $\Gamma \To \Delta \in \Sqt(\sC)$, $\Gamma \To \Delta \in \CLS(\sC)$ if and only if $\Gamma \To \Delta \in \FOCLS(\sC)$.
  \end{itemize}
However, for the purpose of self-containedness, here we describe the direct proof.
  \begin{proof}[of Proposition \ref{Proposition 2}]
  We show that if $\sC$ includes a non-monotonic connective, then $\FOCLS(\sC) \setminus \FOCDS(\sC) \neq \emptyset$. We fix distinct propositional symbols $p$, $q$, $r$ and $s$.
  
  Let $c$ be a non-monotonic connective in $\sC$. We divide into four cases: $(\mathrm{a})$ $\ttfunc{c}(\bzero) = \ttfunc{c}(\bone) = 0$; $(\mathrm{b})$ $\ttfunc{c}(\bzero) = 0$ and $\ttfunc{c}(\bone) = 1$; $(\mathrm{c})$ $\ttfunc{c}(\bzero) = 1$ and $\ttfunc{c}(\bone) = 0$; and $(\mathrm{d})$: $\ttfunc{c}(\bzero) = \ttfunc{c}(\bone) = 1$. We show in the order of $(\mathrm{d})$, $(\mathrm{c})$, $(\mathrm{b})$, $(\mathrm{a})$.
  
  \verticalspace
  
  \textsc{Case} $(\mathrm{d})$: $\ttfunc{c}(\bzero) = \ttfunc{c}(\bone) = 1$. First, we construct a formula $\tau$ in $\FOCD(\sC)$. We define $\tau \in \PropFml(\sC)$ by $\tau \equiv c(s, \ldots, s)$. Then, $\tau \in \FOIL(\sC) \subseteq \FOCD(\sC)$ can be easily verified.
  
  Now, we construct a formula $\varphi \in \FOCL(\sC) \setminus \FOCD(\sC)$. We can see, if such $\varphi$ exists, then ${} \To \varphi \in \FOCLS(\sC) \setminus \FOCDS(\sC)$ holds. Since $c$ is non-monotonic, there exist $\ba, \bb \in \{ 0, 1 \}^{\arity(c)}$ such that $\ba \sqsubseteq \bb$, $\ttfunc{c}(\ba) = 1$ and $\ttfunc{c}(\bb) = 0$. Let $\overline{\bb}^{\ba}$ be the sequence in $\{ 0, 1 \}^{\arity(c)}$ defined by
    \[
    \overline{\bb}^{\ba} =
      \begin{cases}
      0 & \text{if $\ba[i] = 0$ and $\bb[i] = 1$} \\
      1 & \text{if $\ba[i] = 1$ or $\bb[i] = 0$}.
      \end{cases}
    \]
  We divide into two subcases: (\textsc{Subcase} 1) $\ttfunc{c}(\overline{\bb}^\ba) = 1$; and (\textsc{Subcase} 2) $\ttfunc{c}(\overline{\bb}^\ba) = 0$.
  
  \textsc{Subcase} 1: $\ttfunc{c}(\overline{\bb}^\ba) = 1$. We define formulas $\sigma^{\mathrm{P}}_1, \ldots, \sigma^{\mathrm{P}}_{\arity(c)}, \sigma^{\mathrm{P}}\in \PropFml(\sC)$ as follows:
    \begin{align*}
    \sigma^{\mathrm{P}}_i & \equiv
      \begin{cases}
      q & \text{if $\ba[i] = 0$ and $\bb[i] = 0$} \\
      p & \text{if $\ba[i] = 0$ and $\bb[i] = 1$} \\
      \tau & \text{if $\ba[i] = 1$} 
      \end{cases} \\
    \sigma^{\mathrm{P}} & \equiv c(\sigma^{\mathrm{P}}_1, \ldots, \sigma^{\mathrm{P}}_{\arity(c)})  
    \end{align*}
  Then, we define formulas $\psi^{\mathrm{P}}_1, \ldots, \psi^{\mathrm{P}}_{\arity(c)}, \psi^{\mathrm{P}} \in \PropFml(\sC)$ as follows:    
    \begin{align*}
    \psi^{\mathrm{P}}_i & \equiv
      \begin{cases}
      p & \text{if $\ba[i] = 0$ and $\bb[i] = 0$} \\
      \sigma^{\mathrm{P}} & \text{if $\ba[i] = 0$ and $\bb[i] = 1$} \\
      \tau & \text{if $\ba[i] = 1$}
      \end{cases} \\
    \psi^{\mathrm{P}} & \equiv c(\psi^{\mathrm{P}}_1, \ldots, \psi^{\mathrm{P}}_{\arity(c)})  
    \end{align*}
  Furthermore, we define formulas $\varphi^{\mathrm{P}}_1, \ldots, \varphi^{\mathrm{P}}_{\arity(c)}, \varphi^{\mathrm{P}} \in \PropFml(\sC)$ as follows:
    \begin{align*}
    \varphi^{\mathrm{P}}_i & \equiv
      \begin{cases}
      p & \text{if $\ba[i] = 0$ and $\bb[i]  = 0$} \\
      \psi^{\mathrm{P}} & \text{if $\ba[i] = 0$ and $\ba[i] = 1$} \\
      \tau & \text{if $\ba[i] = 1$}
      \end{cases} \\
    \varphi^{\mathrm{P}} & \equiv c(\varphi^{\mathrm{P}}_1, \ldots, \varphi^{\mathrm{P}}_{\arity(c)})  
   \end{align*}
    
  Then, we obtain $\varphi^{\mathrm{P}} \in \FOCL(\sC)$ from the following table.
    \begin{center}
      \begin{tabular}{|C{5mm}|C{5mm}||C{26mm}|C{6mm}|C{26mm}|C{6mm}|C{26mm}|C{6mm}|}
       \hline 
      $p$ & $q$ & $\langle \sigma^{\mathrm{P}}_1, \ldots, \sigma^{\mathrm{P}}_{\arity(c)} \rangle$ & $\sigma^{\mathrm{P}}$ & $\langle \psi^{\mathrm{P}}_1, \ldots, \psi^{\mathrm{P}}_{\arity(c)} \rangle$ & $\psi^{\mathrm{P}}$ & $\langle \varphi^{\mathrm{P}}_1, \ldots, \varphi^{\mathrm{P}}_{\arity(c)} \rangle$ & $\varphi^{\mathrm{P}}$ \\ \hline
      $0$ & $0$ & $\ba$ & $1$ & $\bb$ & $0$ & $\ba$ & $1$ \\ \hline
      $0$ & $1$ & $\overline{\bb}^\ba$ & $1$ & $\bb$ & $0$ & $\ba$ & $1$ \\ \hline
      $1$ & $0$ & $\bb$ & $0$ & $\overline{\bb}^\ba$ & $1$ & $\bone$ & $1$ \\ \hline
      $1$ & $1$ & $\bone$ & $1$ & $\bone$ & $1$ & $\bone$ & $1$ \\ \hline
      \end{tabular}
    \end{center}
  Now, consider the constant domain Kripke model $\sK^* = \langle \{w_0, w_1 \}, \preceq, \{ a_1 \}, I \rangle$ in which
    \begin{itemize}
    \item $w_i \preceq w_j$ if and only if $i \leq j$;
    \item $I(w_0, p) = 0$, $I(w_0, q) = 0$, $I(w_1, p) = 1$, and $I(w_1, q) = 0$. (The interpretations for the other pairs of possible worlds and  predicate symbols may be arbitrary.)
    \end{itemize}  
  Then, we obtain $\| \varphi^{\mathrm{P}} \|_{\sK^*, w_0}^\varnothing = 0$ from the following table. For example, that the element in the second row and fourth column is $\overline{\bb}^\ba$ means that
      \[
      \langle \| \psi^{\mathrm{P}}_1 \|_{\sK^*, w_1}^\varnothing, \ldots, \| \psi^{\mathrm{P}}_{\arity(c)} \|_{\sK^*, w_1}^\varnothing \rangle = \overline{\bb}^\ba.
      \]
    \begin{center}
      \begin{tabular}{|c||C{26mm}|C{6mm}|C{26mm}|C{6mm}|C{26mm}|C{6mm}|}
       \hline 
       & $\langle \sigma^{\mathrm{P}}_1, \ldots, \sigma^{\mathrm{P}}_{\arity(c)} \rangle$ & $\sigma^{\mathrm{P}}$ & $\langle \psi^{\mathrm{P}}_1, \ldots, \psi^{\mathrm{P}}_{\arity(c)} \rangle$ & $\psi^{\mathrm{P}}$ & $\langle \varphi^{\mathrm{P}}_1, \ldots, \varphi^{\mathrm{P}}_{\arity(c)} \rangle$ & $\varphi^{\mathrm{P}}$ \\ \hline
      $\| \cdot \|_{\sK^*, w_1}^\varnothing$ & $\bb$ & $0$ & $\overline{\bb}^\ba$ & $1$ & $\bone$ & $1$ \\ \hline
      $\| \cdot \|_{\sK^*, w_0}^\varnothing$ & $\ba$ & $0$ & $\ba$ & $1$ & $\bb$ & $0$ \\ \hline
      \end{tabular}
    \end{center}  
Hence, $\varphi^{\mathrm{P}} \in \FOCL(\sC) \setminus \FOCD(\sC)$.
   
   \textsc{Subcase} 2: $\ttfunc{c}(\overline{\bb}^\ba) = 0$. We define formulas $\sigma^{\mathrm{Q}}_1, \ldots, \sigma^{\mathrm{Q}}_{\arity(c)}, \sigma^{\mathrm{Q}}\in \PropFml(\sC)$ as follows:
    \begin{align*}
    \sigma^{\mathrm{Q}}_i & \equiv
      \begin{cases}
      q & \text{if $\ba[i] = 0$ and $\bb[i] = 0$} \\
      p & \text{if $\ba[i] = 0$ and $\bb[i] = 1$} \\
      \tau & \text{if $\ba[i] = 1$} 
      \end{cases} \\
    \sigma^{\mathrm{Q}} & \equiv c(\sigma^{\mathrm{Q}}_1, \ldots, \sigma^{\mathrm{Q}}_{\arity(c)})  
    \end{align*}
  Then, we define formulas $\psi^{\mathrm{Q}}_1, \ldots, \psi^{\mathrm{Q}}_{\arity(c)}, \psi^{\mathrm{Q}} \in \PropFml(\sC)$ as follows:    
    \begin{align*}
    \psi^{\mathrm{Q}}_i & \equiv
      \begin{cases}
      \sigma^{\mathrm{Q}} & \text{if $\ba[i] = 0$ and $\bb[i] = 0$} \\
      q & \text{if $\ba[i] = 0$ and $\bb[i] = 1$} \\
      \tau & \text{if $\ba[i] = 1$}
      \end{cases} \\
    \psi^{\mathrm{Q}} & \equiv c(\psi^{\mathrm{Q}}_1, \ldots, \psi^{\mathrm{Q}}_{\arity(c)})  
    \end{align*}
  Furthermore, we define formulas $\varphi^{\mathrm{Q}}_1, \ldots, \varphi^{\mathrm{Q}}_{\arity(c)}, \varphi^{\mathrm{Q}} \in \PropFml(\sC)$ as follows:
    \begin{align*}
    \varphi^{\mathrm{Q}}_i & \equiv
      \begin{cases}
      \psi^{\mathrm{Q}} & \text{if $\ba[i] = 0$ and $\bb[i]  = 0$} \\
      p & \text{if $\ba[i] = 0$ and $\ba[i] = 1$} \\
      \tau & \text{if $\ba[i] = 1$}
      \end{cases} \\
    \varphi^{\mathrm{Q}} & \equiv c(\varphi^{\mathrm{Q}}_1, \ldots, \varphi^{\mathrm{Q}}_{\arity(c)})  
   \end{align*}
    
  Then, we obtain $\varphi^{\mathrm{Q}} \in \FOCL(\sC)$ from the following table.
    \begin{center}
      \begin{tabular}{|C{5mm}|C{5mm}||C{26mm}|C{6mm}|C{26mm}|C{6mm}|C{26mm}|C{6mm}|}
       \hline 
      $p$ & $q$ & $\langle \sigma^{\mathrm{Q}}_1, \ldots, \sigma^{\mathrm{Q}}_{\arity(c)} \rangle$ & $\sigma^{\mathrm{Q}}$ & $\langle \psi^{\mathrm{Q}}_1, \ldots, \psi^{\mathrm{Q}}_{\arity(c)} \rangle$ & $\psi^{\mathrm{Q}}$ & $\langle \varphi^{\mathrm{Q}}_1, \ldots, \varphi^{\mathrm{Q}}_{\arity(c)} \rangle$ & $\varphi^{\mathrm{Q}}$ \\ \hline
      $0$ & $0$ & $\ba$ & $1$ & $\overline{\bb}^\ba$ & $0$ & $\ba$ & $1$ \\ \hline
      $0$ & $1$ & $\overline{\bb}^\ba$ & $0$ & $\bb$ & $0$ & $\ba$ & $1$ \\ \hline
      $1$ & $0$ & $\bb$ & $0$ & $\ba$ & $1$ & $\bone$ & $1$ \\ \hline
      $1$ & $1$ & $\bone$ & $1$ & $\bone$ & $1$ & $\bone$ & $1$ \\ \hline
      \end{tabular}
    \end{center}
    
  On the other hand, we obtain $\| \varphi^{\mathrm{Q}} \|_{\sK^*, w_0}^\varnothing = 0$ from the following table.
  \begin{center}
      \begin{tabular}{|c||C{26mm}|C{6mm}|C{26mm}|C{6mm}|C{26mm}|C{6mm}|}
       \hline 
       & $\langle \sigma^{\mathrm{Q}}_1, \ldots, \sigma^{\mathrm{Q}}_{\arity(c)} \rangle$ & $\sigma^{\mathrm{Q}}$ & $\langle \psi^{\mathrm{Q}}_1, \ldots, \psi^{\mathrm{Q}}_{\arity(c)} \rangle$ & $\psi^{\mathrm{Q}}$ & $\langle \varphi^{\mathrm{Q}}_1, \ldots, \varphi^{\mathrm{Q}}_{\arity(c)} \rangle$ & $\varphi^{\mathrm{Q}}$ \\ \hline
      $\| \cdot \|_{\sK^*, w_1}^\varnothing$ & $\bb$ & $0$ & $\ba$ & $1$ & $\bone$ & $1$ \\ \hline
      $\| \cdot \|_{\sK^*, w_0}^\varnothing$ & $\ba$ & $0$ & $\ba$ & $1$ & $\overline{\bb}^\ba$ & $0$ \\ \hline
      \end{tabular}
    \end{center}  

Hence, $\varphi^{\mathrm{Q}} \in \FOCL(\sC) \setminus \FOCD(\sC)$.  
  
  \verticalspace
  
  \textsc{Case} $(\mathrm{c})$: $\ttfunc{c}(\bzero) = 1$ and $\ttfunc{c}(\bone) = 0$. First, we define formula $\lnot_c \alpha$ for each formula $\alpha \in \Fml(\sC)$ by $\lnot_c \alpha \equiv c(\alpha, \ldots, \alpha)$. Then, $\lnot_c \alpha$ plays the same role as $\lnot \alpha$, that is, for any Kripke model $\sK = \langle W, \preceq, D, I \rangle$, any $w \in W$ and any assignment $\rho$ in $D(w)$, $\| \lnot_c \alpha \|_{\sK, w}^\rho = 1$ if and only if $\| \alpha \|_{\sK, v}^\rho = 0$ for all $v \succeq w$. Fix a predicate symbol $p$. Then, it is easy to verify that $\lnot_c \lnot_c p \To p \in \FOCLS(\sC) \setminus \FOCDS(\sC)$.
  
  \verticalspace
 
  \textsc{Case} $(\mathrm{b})$: $\ttfunc{c}(\bzero) = 0$ and $\ttfunc{c}(\bone) = 1$. Since $c$ is non-monotonic, there exist $\ba, \bb \in \{ 0, 1 \}^{\arity(c)}$ such that $\ba \sqsubseteq \bb$, $\ttfunc{c}(\ba) = 1$ and $\ttfunc{c}(\bb) = 0$. We divide into two subcases: (\textsc{Subcase} 1) $\ttfunc{c}(\overline{\ba}) = 1$; and (\textsc{Subcase} 2) $\ttfunc{c}(\overline{\ba}) = 0$.
  
  \textsc{Subcase} 1: $\ttfunc{c}(\overline{\ba}) = 1$. We define formulas $\chi_1, \ldots, \chi_{\arity(c)}, \chi \in \PropFml(\sC)$ as follows:
    \begin{align*}
    \chi_i & \equiv
      \begin{cases}
      q & \text{if $\ba[i] = 0$} \\
      p & \text{if $\ba[i] = 1$}
      \end{cases} \\
    \chi & \equiv c(\chi_1, \ldots, \chi_{\arity(c)})
    \end{align*}
  Then, we can easily verify that, for any model $\sM = \langle D, I \rangle$, if $I(p) = 1$ or $I(q) = 1$, then $\llbracket \chi \rrbracket_\sM^\varnothing = 1$.
  
  Now, we define formulas $\psi_1, \ldots, \psi_{\arity(c)}, \psi \in \PropFml(\sC)$ as follows:
    \begin{align*}
    \psi_i & \equiv
      \begin{cases}
      q & \text{if $\ba[i] = 0$ and $\bb[i] = 0$} \\
      p & \text{if $\ba[i] = 0$ and $\bb[i] = 1$} \\
      r & \text{if $\ba[i] = 1$ and $\bb[i] = 1$}
      \end{cases} \\
    \psi & \equiv c(\psi_1, \ldots, \psi_{\arity(c)})
    \end{align*}
    Then, we can easily verify that, for any model $\sM = \langle D, I \rangle$, $I(p) = I(q) = 0$ implies $\llbracket \psi \rrbracket_\sM^\varnothing = I(r)$.
    
  Next, we define formulas $\varphi_1, \ldots, \varphi_{\arity(c)}, \varphi \in \PropFml(\sC)$ as follows:
    \begin{align*}
    \varphi_i & \equiv
      \begin{cases}
      q & \text{if $\ba[i] = 0$ and $\bb[i] = 0$} \\
      \psi & \text{if $\ba[i] = 0$ and $\bb[i] = 1$} \\
      r & \text{if $\ba[i] = 1$ and $\bb[i] = 1$}
      \end{cases} \\
    \varphi & \equiv c(\varphi_1, \ldots, \varphi_{\arity(c)})  
    \end{align*}
  Then, we can see that, for any model $\sM = \langle D, I \rangle$, if $I(p) = I(q) = 0$ then $\llbracket \varphi \rrbracket_\sM^\varnothing = 0$.
    
  From the above observation, we obtain $\varphi \To \chi \in \FOCLS(\sC)$. Now, let $\sK^+ = \langle \{ w_0, w_1 \}, \preceq, \{ a_1 \}, I \rangle$ be the constant domain Kripke model defined as follows:
    \begin{itemize}
    \item $w_i \preceq w_j$ if and only if $i \leq j$;
    \item $I(w_0, p) = 0$, $I(w_0, q) = 0$, $I(w_0, r) = 1$, $I(w_1, p) = 1$, $I(w_1, q) = 0$, $I(w_1, r) = 1$.
    \end{itemize} 
  Then, from the following table, we obtain $\| \varphi \|_{\sK^+, w_0}^\varnothing = 1$ and $\| \chi \|_{\sK^+, w_0}^\varnothing = 0$. Hence, $\varphi \To \chi \notin \FOCDS(\sC)$.
    \begin{center}
      \begin{tabular}{|c||C{26mm}|C{6mm}|C{26mm}|C{6mm}|C{26mm}|C{6mm}|}
       \hline 
       & $\langle \chi_1, \ldots, \chi_{\arity(c)} \rangle$ & $\chi$ & $\langle \psi_1, \ldots, \psi_{\arity(c)} \rangle$ & $\psi$ & $\langle \varphi_1, \ldots, \varphi_{\arity(c)} \rangle$ & $\varphi$ \\ \hline
      $\| \cdot \|_{\sK^+, w_1}^\varnothing$ & $\ba$ & $1$ & $\bb$ & $0$ & $\ba$ & $1$ \\ \hline
      $\| \cdot \|_{\sK^+, w_0}^\varnothing$ & $\bzero$ & $0$ & $\ba$ & $0$ & $\ba$ & $1$ \\ \hline
      \end{tabular}
    \end{center}  
    
  \textsc{Subcase} 2: $\ttfunc{c}(\overline{\ba}) = 0$. We define $\psi_1, \ldots, \psi_{\arity(c)}, \psi \in \PropFml(\sC)$ as follows:
    \begin{align*}
    \psi_i & \equiv
      \begin{cases}
      q & \text{if $\ba[i] = 0$} \\
      r & \text{if $\ba[i] = 1$}
      \end{cases} \\
    \psi & \equiv c(\psi_1, \ldots, \psi_{\arity(c)})  
    \end{align*}
  Then, we can easily verify that, for any model $\sM = \langle D, I \rangle$,  $I(r) = 0$ implies $\llbracket \psi \rrbracket_\sM^\varnothing = 0$.
  
  Now, let $\varphi^{\mathrm{PP}}$ be the formula obtained from $\varphi^{\mathrm{P}}$ in subcase 1 of case $(\mathrm{d})$ by replacing every occurrence of $\tau$ with $r$. Let $\varphi^{\mathrm{QQ}}$ be the formula obtained from $\varphi^{\mathrm{Q}}$ in subcase 2 of case $(\mathrm{d})$ by replacing every occurrence of $\tau$ with $r$. Then, similarly to case $(\mathrm{d})$, we obtain either $\psi \To \varphi^{\mathrm{PP}} \in \FOCLS(\sC) \setminus \FOCDS(\sC)$ or $\psi \To \varphi^{\mathrm{QQ}} \in \FOCLS(\sC) \setminus \FOCDS(\sC)$. Hence, $\FOCLS(\sC) \setminus \FOCDS(\sC) \neq \emptyset$.
  
  \verticalspace
  
  \textsc{Case} $(\mathrm{a})$: $\ttfunc{c}(\bzero) = \ttfunc{c}(\bone) = 0$. Since $c$ is non-monotonic, there exists some $\ba \in \{ 0, 1 \}^{\arity(c)}$ such that $\ttfunc{c}(\ba) = 1$.
  
  We define formulas $\psi_1, \ldots, \psi_{\arity(c)}, \psi \in \PropFml(\sC)$ as follows:
    \begin{align*}
    \psi_i & \equiv
      \begin{cases}
      p & \text{if $\ba[i] = 0$} \\
      r & \text{if $\bb[i] = 1$}
      \end{cases} \\
    \psi & \equiv c(\psi_1, \ldots, \psi_{\arity(c)})  
    \end{align*}
  Then, we can easily verify that, for any model $\sM = \langle D, I \rangle$, if $I(p) = 0$ then $\llbracket \psi \rrbracket_\sM^\varnothing = I(r)$.  
  
  Now, we define formulas $\varphi_1, \ldots, \varphi_{\arity(c)}, \varphi \in \PropFml(\sC)$ as follows:
    \begin{align*}
    \varphi_i & \equiv
      \begin{cases}
      \psi & \text{if $\ba[i] = 0$} \\
      r & \text{if $\ba[i] = 1	$}
      \end{cases} \\
    \varphi & \equiv c(\varphi_1, \ldots, \varphi_{\arity(c)})
    \end{align*}
  Then, we can easily verify that, for any model $\sM = \langle D, I \rangle$, if $I(p) = 0$ then $\llbracket \varphi \rrbracket_\sM^\varnothing = 0$. Hence, we obtain $\varphi \To p \in \FOCLS(\sC)$.
  
  On the other hand, for the constant domain Kripke model $\sK^+$ given in case $(\mathrm{b})$, we have $\| p \|_{\sK^+, w_0}^\varnothing = 1$, and we obtain $\| \varphi \|_{\sK^+, w_0}^\varnothing = 1$ from the following table. Hence, we have $\varphi \To p \notin \FOCDS(\sC)$.
    \begin{center}
      \begin{tabular}{|c||C{26mm}|C{6mm}|C{26mm}|C{6mm}|}
       \hline 
        & $\langle \psi_1, \ldots, \psi_{\arity(c)} \rangle$ & $\psi$ & $\langle \varphi_1, \ldots, \varphi_{\arity(c)} \rangle$ & $\varphi$ \\ \hline
      $\| \cdot \|_{\sK^+, w_1}^\varnothing$ & $\bone$ & $0$ & $\ba$ & $1$ \\ \hline
      $\| \cdot \|_{\sK^+, w_0}^\varnothing$ & $\ba$ & $0$ & $\ba$ & $1$ \\ \hline
      \end{tabular}
    \end{center}  
  \qed  
  \end{proof}
\section{Conclusion}
\label{Section 4}
We have seen that generalized Kripke semantics can be extended to first-order logic. Furthermore, if we only admit as models Kripke models with constant domains, then we obtain constant domain Kripke semantics that admits general propositional connectives. Then, extending the the theorem that gives the necessary and sufficient condition for $\ILS(\sC)$ and $\CLS(\sC)$ to coincide, we have obtained the following theorem:
  \begin{theorem*}
  $\FOCDS(\sC) = \FOCLS(\sC)$ if and only if all connectives in $\sC$ are monotonic.
  \end{theorem*} 
 
\bibliographystyle{splncs04}
\bibliography{main}
\end{document}